\newtheorem{theorem}{Theorem}
\newtheorem{definition}{Definition}
\newtheorem*{example}{Example}
\newtheorem{lemma}{Lemma}
\newtheorem*{problem*}{Problem}
\newtheorem{proposition}{Proposition}
\begin{document}

\title{On Suffridge polynomials}

\author{Jimmy Dillies}
\address{University of Georgia, Athens  GA, U.S.A.}
\email{jimmy.dillies@uga.edu}

\author{Dmitriy Dmitrishin}
\address{Odessa National Polytechnic University, Odessa, Ukraine}
\email{dmitrishin@opu.ua}

\author{Alex Stokolos}
\address{Georgia Southern University, Statesboro GA, U.S.A.}
\email{astokolos@georgiasouthern.edu}

\keywords{Geometric complex analysis, Suffridge polynomials}
\subjclass[2000]{30C10, 30C70, 30C50}

\begin{abstract}
We consider some known and some new properties of the family of polynomials introduced by Ted Suffridge in 1969. 
We begin by giving a brief overview of their extremal properties in classic and more recent work.  
We also give a compact form for Suffridge polynomials which matches a general pattern discovered by Brandt.
Our approach allows us to find the coefficients which Brandt's result was not giving explicitly. 
This new presentation provides us the tools to obtain an estimate of the rate of approximation of the generalized Koebe functions by univalent polynomials. 
Furthermore, we consider the presentation of Suffridge polynomials in Robertson's form and find the suiting Robertson measure.
This suggests a new way to approximate step functions by continuous monotonic ones.
We then study the lack of robustness  of the univalency of these polynomials and suggest a new family of polynomials for which we conjecture the univalency of a subclass. 
Namely, we proved a quite surprising fact that extending the family by letting the discrete argument in the polynomial coefficients become continuous one does not increase the set of univalent polynomials. 
Only the initial polynomials are univalent.
In this new one parameter family generalizing the Suffridge polynomials, it is remarkable that the Suffridge polynomials are already extremal as they correspond to the choice of the paramater set to 1; moreover the complex Fej\'er polynomials correspond the choice of the parameter set to 0, and the choice of the paramater set to -1 corresponds the polynomials $z+(z^N/N)$. 
Remarkably, computer simulations seem to clearly indicate that the image of the unit disc under these new polynomial mapping is a simply-connected region bounded by a simple curve. 
This justifies the conjectural univalency of these polynomials for the whole range of the parameters.
\end{abstract}

\maketitle

\section{Introduction}

Suffridge polynomials are a central object of study in the theory of univalent polynomials. 
For example, in the book by P. Duren~\cite{Du}, the ``reference'' for univalent functions,  the section on univalent polynomials is devoted almost exclusively to Suffridge polynomials.

These polynomials were introduced by Ted Suffridge in 1969~\cite{S} and are still essential to modern research because of their remarkable properties and because of the otherwise very limited number of interesting known examples of univalent polynomials.
They are defined as
\[
S_{N,j}(z)= \sum_{k=1}^N \frac{N+1-k}{N} \frac{\sin\frac{kj\pi }{N+1}}{\sin\frac{j\pi}{N+1}} z^k, \qquad j=1,...,N; \; N=1,2,...
\]
and Suffridge (ibid.) showed that they are a good approximation for the Koebe function 
\[
K(z)=\frac{z}{(1-z)^2}.
\]

In this note, we present an overview of some of the main results regarding Suffridge polynomials (Section~\ref{sec:hist}).
We then offer the first closed formula describing their general form (Section~\ref{sec:closed}) and show how this formula matches a general pattern discovered by Brandt (which had the inconvenient of not readily providing suitable coefficients) in Section~\ref{sec:brandt}.
After that, we study extremal properties of Suffridge polynomials (Section~\ref{sec:extremal}) and how they 'approximate' conformal mappings in the unit disc (Section~\ref{sec:approx}).
We also discuss their Robertson measure in Section~\ref{sec:robertson}.
Finally, in the last part of this paper how our polynomials lack 'robustness' (Section \ref{sec:robust}) and suggest a new family (Section~\ref{sec:new}) that, we conjecture, palliates some of these weaknesses.

\section{A Tale of Suffridge Polynomials}
\label{sec:hist} 

Some properties can be checked in a straightforward manner, such as the fact $S$ can be rewritten as

\[
S_{N,j}(z)=z+...+ (-1)^{j-1}z^N/N.
\]

Suffridge was the first to study the essential properties of these polynomials. 

In particular, he proved the univalency in $\mathbb D$, thus $S_{N,j}(z)$ are schlicht functions in  $\mathbb D$, i.e. univalent with zero coefficient zero and the degree one coefficient equal to one. 

Suffridge polynomials are also extremal:
since the derivative of a function univalent in $\mathbb D$ never vanishes in $\mathbb D$ the leading coefficient of the univalent polynomial of the degree $N$ cannot exceed  $1/N$ in absolute value.

In that sense Suffridge polynomial are on the verge of univalence - the roots of the derivative allowed on the boundary, which can seen from the image of the unit circle - it has cusps. 
This fact is illuminated on the hand drawn pictures below (borrowed from Suffridge's original paper~\cite{S}).

\begin{figure}[h!]
\centerline{
  \includegraphics[scale=0.15]{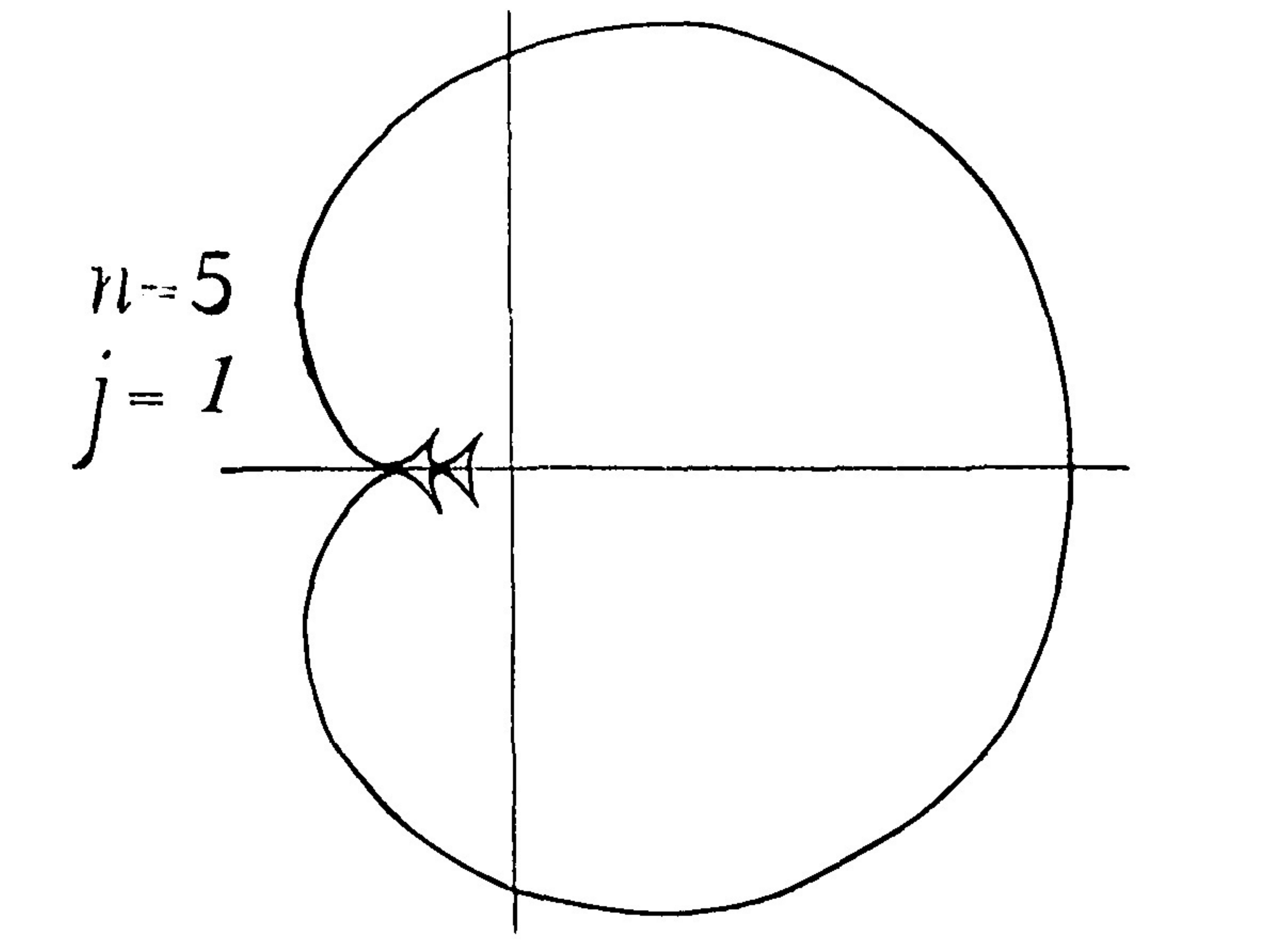}
}
  \caption{Image $S_{5,1}(\mathbb D)$, ex~\cite{S}}
  \label{fig:im1}
\end{figure}

\begin{figure}[h!]
\centerline{
  \includegraphics[scale=0.184]{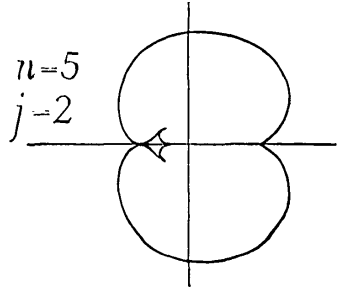}
 }
  \caption{Image $S_{5,2}(\mathbb D)$, ex~\cite{S}}
  \label{fig:im2}
\end{figure}

A careful observation of Figure~\ref{fig:im1} indicates that the the value $|S_{5,1}(-1)|$ might not be the minimal distance from the boundary of the region $S_{5,1}(\mathbb D)$ to the origin. 
More about this phenomena as well as some other interesting results can be found in~\cite{DDS}. 

Moreover, they are extremal as an existing substitute for Koebe functions in these settings.
The Koebe function is extremal in two famous theorems of geometric complex analysis:  the \emph{one quarter Koebe theorem} and in the \emph{Bieberbach conjecture} - now de Branges' theorem. 
Let us to remind these results\\

The {Koebe theorem} states that there exists a constant $r$, such that $f(\mathbb D)\supset D_r$  for any schlicht in $\mathbb D$  function $f.$ Here $D_r=\{|z|<r\}$. 
In 1916 Bieberbach proved that $r=1/4$ 
and that the extremal function is unique (up to rotation). 
The extremal function is called Koebe function
\[
K(z)=\frac z{(1-z)^2}=z+2z^2+3z^3+...
\]
It maps the unit disc $\mathbb D$ in the slit region 
$\mathbb C\backslash (-\infty,-1/4]$. 
In the same article Bieberbach proved that for any schlicht function $f(z)=z+a_2z^2+3z^3+...$  the estimate 
\[
|a_2|\le 2
\]
and conjectured that $|a_k|\le k, k=1,2,...$. 
For over 70 years this conjecture was a driving force behind the development of the geometric complex analysis. 
It was finally proved in 1984 by de Branges. 
Thus, it was established that the solution to both extremal problems function is the Koebe function $K(z)$.

It is natural to ask what is the polynomial version of these results. 
Note that the Koebe function is also on the verge of univalence - any increase in any of its coefficients eliminates the function from the  univalency class; it is thus natural to consider polynomials with the last coefficient $ 
1/N.$

Denote the coefficients of the Suffridge polynomial $S_{N, 1} (z) $ by $ A_j$, $j = 1, \ldots, N $.  
Let $ P_N (z) = z + a_2z ^ 2 + ... + a_Nz ^ N$ be any schlicht polynomial with real coefficients \footnote {Everywhere below we will assume all coefficients to be real} and the normalization to be as described above. Then 
$$ 
|a_j| \le A_j, j = 1, ..., N. 
$$

Further, Suffridge found an expression for the values of the polynomial on the unit circle (formula (5) in~\cite{S})
\begin{equation} \label{s5}
S_{N,j}(e^{it})=\frac{N+1}{2N(\cos t -\cos\alpha)} + i \frac{\sin t(1-(-1)^je^{i(N+1)t})}{2N(\cos t -\cos\alpha)},
\end{equation}
where $\alpha=j\pi/(N+1).$ 
Surprisingly, the compact form of the Suffridge polynomials has not been found yet.  Let us derive such a form below.

\section{Closed form} 
\label{sec:closed}


\begin{lemma}[Key ingredient]
\label{lem:key}
{\sl
Let $\displaystyle S_N= \sum_{k=1}^N \sin(k\alpha) z^k$ and $\displaystyle  C_N=  \sum_{k=1}^N \cos(k\alpha) z^k$ then 
\[S_N= z\frac{ \sin(\alpha) - \sin\left( (N+1) \alpha\right)z^N + \sin\left(N\alpha\right)z^{N+1}}{1-2\cos(\alpha)z+z^2}\]
and
\[C_N= z\frac{ \cos(\alpha) - z - \cos\left( (N+1) \alpha\right)z^N + \cos\left(N\alpha\right)z^{N+1}}{1-2\cos(\alpha)z+z^2}.\]
}
\end{lemma}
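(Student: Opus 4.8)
The plan is to treat the two sums simultaneously by packaging them into a single geometric series. Since $\cos(k\alpha)+i\sin(k\alpha)=e^{ik\alpha}$, I would form the combination
\[
C_N + i S_N = \sum_{k=1}^N e^{ik\alpha} z^k = \sum_{k=1}^N \left( e^{i\alpha} z \right)^k,
\]
where $i$ is the imaginary unit and $z$ is an independent variable. This is a finite geometric series with ratio $e^{i\alpha}z$, so summing it gives
\[
C_N + i S_N = \frac{e^{i\alpha} z - e^{i(N+1)\alpha} z^{N+1}}{1 - e^{i\alpha} z}.
\]

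The next step is to trade the complex denominator for the real quadratic appearing in the statement. Multiplying numerator and denominator by $1 - e^{-i\alpha} z$ turns the denominator into
\[
(1 - e^{i\alpha} z)(1 - e^{-i\alpha} z) = 1 - 2\cos(\alpha) z + z^2,
\]
which is exactly the target denominator and, crucially, has real coefficients. Expanding the resulting numerator and collapsing the exponentials yields
\[
e^{i\alpha} z - z^2 - e^{i(N+1)\alpha} z^{N+1} + e^{iN\alpha} z^{N+2}.
\]

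Finally I would read off $S_N$ and $C_N$ by separating real and imaginary parts coefficientwise. Because the denominator $1-2\cos(\alpha)z+z^2$ has real coefficients while $S_N$ and $C_N$ are polynomials with real coefficients, the imaginary part of the numerator over this denominator must equal $S_N$ and the real part must equal $C_N$. Taking $\operatorname{Re}$ and $\operatorname{Im}$ of $e^{i\alpha}$, $e^{iN\alpha}$, $e^{i(N+1)\alpha}$ and factoring out one power of $z$ then produces precisely the two stated formulas.

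The only point deserving care is this last separation: one must keep straight that $i$ is the imaginary unit whereas $z$ is a complex variable, so the real/imaginary split is performed on the coefficients (real combinations of sines and cosines of multiples of $\alpha$) and not on $z$. To sidestep any worry about the denominator vanishing, and perhaps more safely, I would instead bypass the division and verify both identities by clearing denominators, checking the polynomial identity $S_N\cdot(1-2\cos(\alpha)z+z^2)=z\bigl(\sin\alpha-\sin((N+1)\alpha)z^N+\sin(N\alpha)z^{N+1}\bigr)$ together with its cosine analogue term by term. Presented this way, the degenerate cases $\alpha\in\pi\mathbb{Z}$, where the quadratic acquires a double root, are covered automatically, since one is merely asserting equalities of polynomials.
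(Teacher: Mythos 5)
Your proof is correct and follows essentially the same route as the paper's: packaging $C_N + i S_N$ as a finite geometric series, clearing the complex denominator via the conjugate factor $1-e^{-i\alpha}z$, and splitting real and imaginary parts coefficientwise. Your added care about treating the split at the level of coefficients (since $z$ is complex) and about the degenerate cases $\alpha \in \pi\mathbb{Z}$ is a welcome refinement of details the paper leaves implicit.
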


\begin{proof}[Proof]
By using the exponential notation, we can express $S_N$ and $C_N$ simultaneously as a finite geometric sequence
\[ C_N + i S_N =   \sum_{k=1}^N \left(e^{i\alpha} z\right)^k \]
which can then be rewritten as
\[ \frac{1-\left(e^{i\alpha}z \right)^N}{1-e^{i\alpha}z}e^{i\alpha}z . \]
The final result is then obtained by clearing the denominator of complex terms and using the real-imaginary decomposition.
\end{proof}

From this result we  derive (no pun intended) :

\begin{lemma}
\label{lem:dkey}
{\sl
Let $\displaystyle T_N= \sum_{k=1}^N k \sin(k\alpha) z^k$ then 
\[T_N = \frac{z}{(1-2\cos(\alpha)z+z^2)^2}\left\{
\sin (\alpha) \right. + \ldots \]
\[
-  \sin (\alpha ) z^2+ \ldots \]
\[
- ( N + 1 ) \sin ( (N+1) \alpha )  z^N + \ldots \]
\[
+( (N  + 2 )  \sin ( N \alpha ) 
+2 N \cos (\alpha ) \sin ( (N+1) \alpha ) ) z^{N+1} +\ldots \]
\[ 
- (( N -1)  \sin ( (N+1)\alpha ) 
+ 2 (N+1) \cos (\alpha ) \sin (N\alpha )) z^{N+2} + \ldots \]
\[\left.
+ N  \sin (\alpha  N) z^{N+3}\right\} 
\]
}
\end{lemma}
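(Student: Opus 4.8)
The plan is to obtain $T_N$ from the closed form for $S_N$ in Lemma~\ref{lem:key} by differentiation. The key observation is that the operator $z \frac{d}{dz}$ multiplies the $k$-th coefficient by $k$, so that
\[
T_N = \sum_{k=1}^N k \sin(k\alpha) z^k = z \frac{d}{dz} S_N = z \frac{d}{dz}\left( z \frac{\sin(\alpha) - \sin((N+1)\alpha) z^N + \sin(N\alpha) z^{N+1}}{1 - 2\cos(\alpha) z + z^2}\right).
\]
This reduces the entire statement to a single (routine but lengthy) differentiation of a rational function, followed by simplification. I would first carry out the differentiation using the quotient rule, writing $S_N = z \cdot P(z)/Q(z)$ with $P(z) = \sin(\alpha) - \sin((N+1)\alpha) z^N + \sin(N\alpha) z^{N+1}$ and $Q(z) = 1 - 2\cos(\alpha) z + z^2$. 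Since $Q$ appears squared in the target denominator, the natural path is to compute $\frac{d}{dz}(zP/Q) = \frac{(P + zP')Q - zP Q'}{Q^2}$ and then multiply by the outer $z$, which yields the prefactor $\frac{z}{Q^2}$ matching the claimed form.

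The next step is to expand the numerator $N(z) := z\bigl[(P + zP')Q - zP Q'\bigr]$ and collect it by powers of $z$. Here $Q' = -2\cos(\alpha) + 2z$, and $P' = -N\sin((N+1)\alpha) z^{N-1} + (N+1)\sin(N\alpha) z^N$. I would organize the computation by grouping the contributions into a low-degree block (coming from the constant term $\sin(\alpha)$ of $P$, producing the $z$ and $-z^3$ terms after the outer multiplication, i.e.\ the $\sin(\alpha)$ and $-\sin(\alpha) z^2$ coefficients inside the braces) and a high-degree block (coming from the $z^N$ and $z^{N+1}$ terms of $P$, producing the $z^N$ through $z^{N+3}$ coefficients). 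The low-degree block is immediate; the high-degree block requires carefully tracking how each of $P$, $zP'$, $Q$, and $zPQ'$ contributes to each target power.

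The main obstacle will be the bookkeeping in the high-degree block, specifically verifying the two ``mixed'' coefficients at $z^{N+1}$ and $z^{N+2}$, which combine a term of the form $(\text{const})\sin(N\alpha)$ or $(\text{const})\sin((N+1)\alpha)$ with a cross term $2N\cos(\alpha)\sin((N+1)\alpha)$ or $2(N+1)\cos(\alpha)\sin(N\alpha)$ arising from the $-2\cos(\alpha)z$ part of $Q$ acting on the $z^N$, $z^{N+1}$ terms. The coefficients $(N+2)$ and $(N-1)$ in the statement must emerge from summing the separate degree shifts contributed by $P$, $P'$, and $Q'$; I would double-check these by isolating, for each target power $z^{N+1}, z^{N+2}$, the finite list of products of monomials that land there and summing their coefficients. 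Once both blocks are verified term by term, the claimed expression for $T_N$ follows directly, and no trigonometric identities beyond linearity are needed—everything is polynomial algebra in $z$ with symbolic coefficients $\sin(k\alpha)$, $\cos(\alpha)$.
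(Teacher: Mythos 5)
Your proposal is correct and takes exactly the same route as the paper: the paper's proof is the one-line observation that $T_N = z\frac{d}{dz}S_N$ combined with Lemma~\ref{lem:key}, which is precisely your starting point. The quotient-rule expansion and coefficient bookkeeping you outline (and which does check out, e.g.\ the $z^{N+1}$ coefficient $2(N+1)\cos\alpha\sin((N+1)\alpha)+(N+2)\sin(N\alpha)-2\cos\alpha\sin((N+1)\alpha)$ collapsing to the claimed $(N+2)\sin(N\alpha)+2N\cos\alpha\sin((N+1)\alpha)$) is just the routine computation the paper leaves implicit.
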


\begin{proof}[Proof]
This identity follows from Lemma~\ref{lem:key} and from the observation that $T_N= z \frac{d}{dz}S_N$.
\end{proof}


A first application of the above Lemmas is the derivation of a closed form for Suffridge polynomials.
We do not know of this form in the literature though Suffridge have used a similar argument to look at the image of the circle $\partial\mathbb D$. \\

\begin{proposition}[Suffridge in Closed Form]
\label{pro:closed}
{\sl
The Suffridge polynomials can be written as 
\[ S_{N,j}(z)= z\frac{ N-2(N+1)\cos\left(\frac{j\pi}{N+1}\right)z+(N+2)z^2+(-1)^j z^{N+1}-(-1)^j z^{N+3}}{N\left(1-2\cos\left(\frac{j\pi}{N+1}\right)z+z^2\right)^2} \]
}
\end{proposition}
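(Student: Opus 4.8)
The plan is to recognize that the Suffridge coefficients split along precisely the two sums already evaluated in Lemmas~\ref{lem:key} and~\ref{lem:dkey}. Setting $\alpha = j\pi/(N+1)$ and writing $N+1-k = (N+1)-k$, I would record the identity
\[
S_{N,j}(z) = \frac{1}{N\sin\alpha}\sum_{k=1}^N (N+1-k)\sin(k\alpha)\,z^k = \frac{1}{N\sin\alpha}\bigl[(N+1)S_N - T_N\bigr],
\]
where $S_N$ and $T_N$ are the sums from those lemmas. The entire computation then reduces to substituting the two closed forms and simplifying.

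Before combining, I would use the special value $\alpha = j\pi/(N+1)$, which is what makes the answer collapse. Since $(N+1)\alpha = j\pi$, we have $\sin((N+1)\alpha) = 0$ and $\cos((N+1)\alpha) = (-1)^j$, and the angle-subtraction formula gives $\sin(N\alpha) = \sin((N+1)\alpha - \alpha) = -(-1)^j\sin\alpha$. Feeding these into Lemma~\ref{lem:key} collapses $S_N$ to $z\sin\alpha\,(1-(-1)^j z^{N+1})/D$ with $D = 1 - 2\cos\alpha\,z + z^2$, while in Lemma~\ref{lem:dkey} the $z^N$ term vanishes and every surviving boundary term acquires a factor of $\sin\alpha$. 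This shared factor $\sin\alpha$ cancels against the $1/\sin\alpha$ in front, which is exactly what removes the apparent singularity at $\sin\alpha = 0$.

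The heart of the argument is then placing everything over the common denominator $D^2$: I would multiply the $S_N$ numerator by the extra factor $D$, subtract the (degree $N+3$) $T_N$ numerator, and collect the coefficients of $z^0, z, z^2$ and of $z^{N+1}, z^{N+2}, z^{N+3}$. I expect the low-degree terms to produce $N$, $-2(N+1)\cos\alpha$, $N+2$, and the high-degree terms to produce $(-1)^j$, $0$, $-(-1)^j$. The anticipated obstacle is purely this bookkeeping, and in particular two cancellations: the $z^{N+2}$ coefficient must vanish exactly, since the contribution $2(N+1)(-1)^j\cos\alpha$ coming from $(N+1)S_N\cdot D$ has to match the one coming from $T_N$; and the $z^{N+1}$ and $z^{N+3}$ coefficients must telescope to $\pm(-1)^j$ only after the weight $N+1$ from $S_N$ combines with the weights $N+2$ and $N$ from $T_N$. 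Once these are verified, dividing through by $N$ yields precisely the claimed numerator over $N\,D^2$.
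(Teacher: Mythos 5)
Your proposal is correct and follows exactly the paper's route: the same splitting $N+1-k = (N+1)-k$, giving $S_{N,j} = \frac{1}{N\sin\alpha}\bigl[(N+1)S_N - T_N\bigr]$ with $S_N$, $T_N$ from Lemmas~\ref{lem:key} and~\ref{lem:dkey}. The paper states the conclusion without carrying out the algebra, whereas you correctly work through the simplifications at $\alpha = j\pi/(N+1)$ (vanishing of $\sin((N+1)\alpha)$, the cancellation at $z^{N+2}$, and the telescoping at $z^{N+1}$, $z^{N+3}$), all of which check out.
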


\begin{proof}[Proof]
Recall that the Suffridge polynomials are defined as
\[S_{N,j}(z)=\sum_{k=1}^N A_k z^k\]
where
\[A_k=A_k(N,j)=\frac{N-k+1}{N}\frac{\sin\left(k\frac{j\pi}{N+1}\right)}{\sin\left(\frac{j\pi}{N+1}\right)}.\]
We can decompose $A_k$ as 
\[ \frac{N+1}{N}\frac{\sin\left(k\frac{j\pi}{N+1}\right)}{\sin\left(\frac{j\pi}{N+1}\right)}
- \frac{k}{N}\frac{\sin\left(k\frac{j\pi}{N+1}\right)}{\sin\left(\frac{j\pi}{N+1}\right)}. \]
Applying Lemmas~\ref{lem:key} and~\ref{lem:dkey} respectively to the first and second term gives the above result.
\end{proof}

\begin{example}
$S_{5,2}(z)
=\frac{z\left( 5-6z+7z^2- z^6+z^8
\right)}{5\left(1-z+z^2\right)^2}
=-\frac{z^5}{5}-\frac{2 z^4}{5}+\frac{4 z^2}{5}+z$
\end{example}

\section{Brandt representation}
\label{sec:brandt}

Note that Brandt~\cite{B2} suggested a general form of typically real polynomials $z+a_2z^2+...+a_nz^n$. The set of such polynomials was denoted by $T_n.$ 
His idea in itself is quite remarkable, but the coefficients included in it are difficult to choose. Here is a theorem from~\cite{B2}

\begin{theorem}
Let $f(z)$ be a rational function normalized by
\[
\label{eq:2}
f(0)=0,\qquad f^\prime(0)=1.
\]
Then the following statements are equivalent:

\begin{enumerate}[label=(\alph*)]

\item $f(z)$ belongs to the class $T_n$

\item There are $n$ real numbers $b_k$ and $b_k'$ with
\[
\label{eq:3}
f(z)=4n\sum_{k=1}^{[n/2]}\left(b_k^2\cos^2\frac{(2k-1)\pi}{2n} +
{b_k'}^2\sin^2\frac{(2k-1)\pi}{2n} \right)\frac{z}{1-2z\cos\frac{(2k-1)\pi}n+z^2}
\]
\[
-(1+z^n)(1-z^2)(1+z^2)\left(\sum_{k=1}^{[(n+1)/2]}
\frac{b_k}{1-2z\cos\frac{(2k-1)\pi}n+z^2} \right)^2
\]
\[
+(1+z^n)(1-z^2)(1+z^2)\left(\sum_{k=1}^{[n/2]}
\frac{b_k'}{1-2z\cos\frac{(2k-1)\pi}n+z^2} \right)^2
\]
\[
+\left(\sum_{k=1}^{[(n+1)/2]} b_k' \right)^2
-\left(\sum_{k=1}^{[n/2]} b_k' \right)^2
\]

\item There are $n$ real numbers $c_k$ and $c_k'$ with
\[
\label{eq:4}
f(z)=4n\sum_{k=1}^{[(n-1)/2]}\left(c_k^2\cos^2\frac{(k\pi}{n} +
{c_k'}^2\sin^2\frac{(k\pi}{n} \right)\frac{z}{1-2z\cos\frac{2k\pi}n+z^2}
\]
\[
-(1-z^n)(1-z^2)(1+z^2)\left(\sum_{k=1}^{[n/2]}
\frac{c_k}{1-2z\cos\frac{(k\pi}n+z^2} \right)^2
\]
\[
+(1-z^n)(1-z^2)(1+z^2)\left(\sum_{k=1}^{[(n-1)/2]}
\frac{c_k'}{1-2z\cos\frac{2k\pi}n+z^2} \right)^2
\]
\[
+\left(\sum_{k=1}^{[n/2]} c_k \right)^2
-\left(\sum_{k=1}^{[(n-1)/2]} c_k' \right)^2.
\]

\end{enumerate}
\end{theorem}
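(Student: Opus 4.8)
The plan is to translate the characterization into a statement about \emph{nonnegative polynomials on an interval}, for which the classical Luk\'acs--Markov theory furnishes exactly the kind of sum-of-squares decompositions that appear in (b) and (c). First I would record the boundary criterion for typical reality. Writing the candidate as $f(z)=\sum_{k=1}^n a_k z^k$ with real $a_k$ and $a_1=1$, on $z=e^{i\theta}$ one has $\operatorname{Im}f(e^{i\theta})=\sum_{k=1}^n a_k\sin k\theta$, so that, with $x=\cos\theta$ and $U_{k-1}$ the Chebyshev polynomial of the second kind (for which $\sin k\theta/\sin\theta=U_{k-1}(\cos\theta)$),
\[
Q(x):=\frac{\operatorname{Im}f(e^{i\theta})}{\sin\theta}=\sum_{k=1}^n a_k\,U_{k-1}(x).
\]
Since $\operatorname{Im}f$ is harmonic and vanishes on the real diameter, the minimum principle on the upper half-disc shows that $f$ is typically real in $\mathbb D$ if and only if $Q(x)\ge 0$ on $[-1,1]$; this is membership in $T_n$, and the normalization $a_1=1$ becomes a single linear constraint on $Q$. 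I would also record the inverse passage from $Q$ back to $f$: using orthogonality of the $U_{k-1}$ for the weight $\sqrt{1-x^2}$ together with $\sum_{k\ge1}U_{k-1}(x)z^k=z/(1-2xz+z^2)$, one has
\[
f(z)=\frac{2}{\pi}\int_{-1}^{1}\frac{z}{1-2xz+z^2}\,Q(x)\sqrt{1-x^2}\,dx .
\]

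Next I would feed in the Luk\'acs--Markov representation of a polynomial $Q\ge0$ on $[-1,1]$: in the two parity cases one writes $Q=P^2+(1-x^2)R^2$ or $Q=(1+x)P^2+(1-x)R^2$, with $\deg P,\deg R$ dictated by $\deg Q=n-1$. Substituting either decomposition into the integral above and expanding the squares is what produces the two forms (b) and (c). The fixed nodes are forced by the identity $z^n=\mp1\iff T_n(x)=\mp1$, where $x=\tfrac12(z+z^{-1})$: the interior extrema of $T_n$ at which $T_n=-1$ are the points $\cos\frac{(2k-1)\pi}{n}$, the real parts of the roots of $1+z^n$, and those at which $T_n=+1$ are the points $\cos\frac{2k\pi}{n}$, the real parts of the roots of $1-z^n$. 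Interpolating $P$ and $R$ at these Chebyshev--Lobatto nodes, with $b_k,b_k'$ (resp.\ $c_k,c_k'$) their nodal values, converts the diagonal contributions into the positive combination $4n\sum_k(b_k^2\cos^2\vartheta_k+{b_k'}^2\sin^2\vartheta_k)\,z/(1-2z\cos2\vartheta_k+z^2)$ with $\vartheta_k=\frac{(2k-1)\pi}{2n}$, while the node polynomial $T_n\mp1$ supplies the prefactor $1\pm z^n$ and the weight factors $1\pm x$, $1-x^2$ supply $(1-z^2)(1+z^2)$.

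The hard part will be the final bookkeeping step. One must show that the off-diagonal cross terms arising from squaring $\sum_k b_k/\bigl(1-2z\cos\frac{(2k-1)\pi}{n}+z^2\bigr)$ reassemble \emph{exactly} into $(1+z^n)(1-z^2)(1+z^2)\left(\sum_k b_k/\bigl(1-2z\cos\frac{(2k-1)\pi}{n}+z^2\bigr)\right)^2$, and, crucially, that every apparent pole at $z=e^{\pm i(2k-1)\pi/n}$ cancels --- which happens precisely because those points are the zeros of $1+z^n$, so that the prefactor clears the double poles and leaves a genuine polynomial of degree $n$. The additive constants (the two squared partial sums of nodal values) are then pinned down by enforcing $f(0)=0$, since each squared partial fraction contributes a nonzero value at $z=0$ that must be subtracted off. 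Carrying this out for both node families, and checking the degree count and the normalization $f'(0)=1$ against the constraint on $Q$, yields the equivalences $\mathrm{(a)}\Leftrightarrow\mathrm{(b)}$ and $\mathrm{(a)}\Leftrightarrow\mathrm{(c)}$; the reverse implications follow from the same correspondence, since any expression of the stated form has nonnegative $Q$ and hence lies in $T_n$.
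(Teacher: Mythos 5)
First, a point of order: the paper contains no proof of this statement. It is Brandt's representation theorem, quoted (typos included) from reference~\cite{B2} --- the text says explicitly ``Here is a theorem from~\cite{B2}'' --- and the paper only uses it as a black box to exhibit the coefficients realizing Suffridge polynomials. So there is no in-paper argument to compare yours against, and your proposal must be judged on its own terms. Your opening reductions are correct and are certainly the right place to start: for a real polynomial $f(z)=\sum_{k=1}^n a_kz^k$, typical reality in $\mathbb D$ is equivalent, by the minimum principle for the harmonic function $\operatorname{Im}f$ on the upper half-disc, to $Q(x)=\sum_k a_kU_{k-1}(x)\ge 0$ on $[-1,1]$, and the inversion formula $f(z)=\frac2\pi\int_{-1}^1\frac{z}{1-2xz+z^2}Q(x)\sqrt{1-x^2}\,dx$ is the correct bridge back (the paper itself uses both facts in its Robertson-measure section). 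The Luk\'acs decomposition is also the right classical input.

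Beyond that, however, there are genuine gaps, not just ``bookkeeping.'' (i) Your mechanism for producing the two forms --- ``in the two parity cases one writes $Q=P^2+(1-x^2)R^2$ or $Q=(1+x)P^2+(1-x)R^2$\,'' --- cannot be what distinguishes (b) from (c): the theorem asserts that \emph{both} representations hold for \emph{every} $f\in T_n$, whereas for fixed $n$ the polynomial $Q$ has degree $n-1$ of a single parity. What separates (b) from (c) is the choice of node system (roots of $z^n=-1$ versus $z^n=+1$), and for each node family, for every $n$, you need a decomposition of $Q$ adapted to those nodes (non-minimal sums of squares with prescribed weight factors); this is precisely the step your plan does not supply. (ii) The pole cancellation is misdescribed: $1\pm z^n$ has only \emph{simple} zeros at the nodes, so it cannot ``clear the double poles'' of the squared partial-fraction sums; after multiplication simple poles remain, and they must cancel against the simple poles of the diagonal sum. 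Verifying that residue matching --- which is exactly what forces the coefficients $4n\bigl(b_k^2\cos^2\vartheta_k+{b_k'}^2\sin^2\vartheta_k\bigr)$ --- is the actual content of the theorem, and it is the part deferred. (iii) The converse implications are not ``the same correspondence'': for arbitrary real $b_k,b_k'$ one must prove the expression is a polynomial of degree at most $n$, correctly normalized, with $\operatorname{Im}f\ge0$ on the upper semicircle; none of this is automatic, and indeed for the formula exactly as printed it is false --- the transcription in the paper is corrupted (unbalanced parentheses, and $b_k'$ where $b_k$ is needed in the final constants, without which $f(0)=0$ becomes a constraint on the parameters rather than an identity). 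A complete proof would therefore have to be written against Brandt's original formulation, with the central identity established for arbitrary nodal values, not only for those coming from a Luk\'acs decomposition.
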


Since Suffridge polynomials are typically real,  they  should have an appropriate Brandt representation.  We managed to find the coefficients. 

Namely, to represent Suffridge polynomials in the form of rational function we can use formula (3) of Theorem 1 for odd $j$ and formula (4) if $j$ is even. 
Let 
\[
n=N+1,\quad k=\lfloor \frac{j+1}2\rfloor\quad\mbox{and}\;\quad b_k=b_k^\prime=\frac1{2\sqrt N},\;
c_k=c_k^\prime=\frac1{2\sqrt N}. 
\]
Then it is easy to check the formulas (3) and (4) coincides with our Proposition 1.\\

\section{Extremal properties of Suffridge polynomials}
\label{sec:extremal}

Formula \eqref{s5} allows to compute the quantity
\[
S_{N,1}(-1)=-\frac14\frac{N+1}{N}\sec^2\frac\pi{2(N+1)}\to -\frac14,\; N\to\infty .
\] 
Thus Suffridge polynomials can be used to prove that the $\sfrac{1}{4}$ constant in Koebe's theorem is sharp. Hence, in the above sense, Suffridge polynomials can be considered as a substitute for Koebe functions.

By the way, Brandt~\cite[p.79 (466)]{B} solved the extreme problem of evaluating the modulus of schlicht polynomials of degree $N, $ showing that the extreme polynomial is a Suffridge polynomial, and that the maximum value is
\[
S_{N,1}(1)=\frac14\frac{N+1}{N}\csc^2\frac\pi{2(N+1)}.
\]
  
Dmitrishin and Khamitova~\cite{DK} announced a new non-obvious extreme properties of Suffridge polynomials. 
Namely, they are, after appropriate renormalization, the only optimal polynomials for the following extremal problem
\[
\sup_ {a_1+...+a_N=1} 
\left ({ \min_t 
\left\{ {\Re \left ({ F_N\left ({ { e^ {it} } } \right) } \right): \Im 
\left ({ F_N\left ({ { e^ {it} } } \right) } \right) = 0 } \right\} } \right) 
\]
where $F_N(z)$ is any polynomial of degree $N$ with zero coefficient zero. 
The solution to this extreme problem as well very elegant and surprising applications to discrete dynamic systems are given in~\cite{DHKKS}.

Another nice feature of Suffridge polynomials was observed by Genthner, Ruscheweyh and Salinas.
In~\cite{GRS}, they proposed an interesting characterization of the boundary of simply connected domains. 
More precisely, an oriented closed curve $\gamma:[0,2\pi)\rightarrow{\mathbb C}$ is called quasi-simple if it represents the positively oriented boundary of a simply connected domain $D_\gamma\subset{\mathbb C}$.
In this paper the authors give a nonstandard criterion for closed plane curves to be quasi-simple. 
Along the way, Genthner et al. define the concept of quasi-extremal polynomial.

\begin{definition}[\cite{GRS}, Definition~11] \label{def:qe}
Let $P$ be a complex polynomial of degree $n$.
We call $P$ {\it quasi-extremal} (q-e) if there exists a simply connected domain $\Omega\subset{\mathbb C}$ such that
\begin{enumerate}
\item $P(\mathbb D)\subset \Omega$
\item $P'$ has $n-1$ zeroes on $\partial \mathbb D$, say in $e^{i\theta_k}$, where the angles $\theta_k$ are labeled such that $\theta_1< \theta_2<\ldots <\theta_n=\theta_1+2\pi$.
\item There exists $\tau_j \in [\theta_j,\theta_{j+1})$ with $P(e^{i\tau_j})\in \partial \Omega$, $j=1,\ldots,n-1$.
\end{enumerate}
\end{definition}

The pertinence of q-e polynomials is a consequence of the following theorem.

\begin{theorem}[\cite{Gen,GRS}]
Every quasi-extremal polynomial is univalent in $\mathbb D$.
\end{theorem}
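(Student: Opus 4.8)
The plan is to establish univalence via the argument principle, after first using the hypotheses to reduce matters to a single winding-number bound for the boundary curve $\gamma(t)=P(e^{it})$, $t\in[0,2\pi]$. The first observation is that condition (2) gives local univalence for free. Since $P$ has degree $n$, its derivative $P'$ is a polynomial of degree $n-1$ with exactly $n-1$ zeros in $\mathbb{C}$ counted with multiplicity, and condition (2) places all of them on $\partial\mathbb{D}$. Hence $P'$ has no zeros in the open disk, so $P$ is an open, locally biholomorphic map on $\mathbb{D}$. Consequently, for any $w_0\notin\gamma$ the number $N(w_0)$ of solutions of $P(z)=w_0$ in $\mathbb{D}$ is finite and counted without multiplicity, and by the argument principle it equals the winding number $n(\gamma,w_0)$; moreover $N$ is locally constant on $\mathbb{C}\setminus\gamma$ and vanishes on the unbounded component. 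Univalence is therefore equivalent to the single inequality $n(\gamma,w_0)\le 1$ for every $w_0\notin\gamma$.

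Next I would analyze the shape of $\gamma$. Its velocity is $\gamma'(t)=ie^{it}P'(e^{it})$, which vanishes precisely at the critical angles $\theta_1<\dots<\theta_{n-1}$, so $\gamma$ decomposes into the $n-1$ regular arcs $\gamma|_{[\theta_j,\theta_{j+1}]}$ meeting at the $n-1$ cusps $P(e^{i\theta_j})$. By condition (1) the whole curve lies in $\overline{\Omega}$, and by condition (3) each arc reaches out to meet $\partial\Omega$ at the point $w_j:=P(e^{i\tau_j})$. The crux, and what I expect to be the main obstacle, is to convert this ``one touch of $\partial\Omega$ per arc, otherwise interior to $\Omega$'' picture into the exact count $n(\gamma,w_0)=1$ for $w_0\in\Omega$ and $n(\gamma,w_0)=0$ for $w_0\notin\overline{\Omega}$. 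To do this I would fix $w_0\in\Omega$, choose a Riemann map $\phi:\Omega\to\mathbb{D}$ with $\phi(w_0)=0$, and study $\tilde\gamma=\phi\circ\gamma$, after a Carathéodory boundary-extension argument for which one may first need to show that $\partial\Omega$ is a Jordan curve. Since the winding number is a conformal (indeed topological) invariant, $n(\gamma,w_0)=n(\tilde\gamma,0)$, and $\tilde\gamma$ is a closed curve in $\overline{\mathbb{D}}$ touching $\partial\mathbb{D}$ exactly once on each of the $n-1$ arcs and staying strictly inside in between. It then remains to show that these $n-1$ touch points occur in monotone cyclic order as $t$ increases, so that the total change of $\arg\tilde\gamma(t)$ over $[0,2\pi]$ is exactly $2\pi$. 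This is the delicate point, where simple-connectedness of $\Omega$, the ordering $\theta_1<\dots<\theta_{n-1}$ of condition (2), and the placement $\tau_j\in[\theta_j,\theta_{j+1})$ of condition (3) must all be combined to exclude any backtracking of $\tilde\gamma$ around the origin.

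Finally I would close the argument. Granting $n(\gamma,w_0)\le 1$ off $\gamma$, any hypothetical collision $P(z_1)=P(z_2)=w$ with distinct $z_1,z_2\in\mathbb{D}$ forces at least two preimages: if $w\notin\gamma$ this immediately contradicts the winding bound, while if $w\in\gamma$ I would perturb $w$ to a nearby regular value $w'$ and use the openness of $P$ near $z_1$ and near $z_2$ to produce two preimages of $w'$ in $\mathbb{D}$, again contradicting $n(\gamma,w')\le 1$. Hence $P$ is injective on $\mathbb{D}$. I note that condition (1), which gives $P(\mathbb{D})\subset\Omega$ with $\Omega$ open, guarantees that no interior point maps onto $\partial\Omega$, so the critical boundary values $w_j$ never interfere with interior univalence; this is what makes the ``on the verge of univalence'' situation (critical points sitting exactly on $\partial\mathbb{D}$) compatible with genuine univalence inside the disk.
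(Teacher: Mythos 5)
The paper does not actually prove this theorem: it is quoted from Genthner's thesis and from Genthner--Ruscheweyh--Salinas \cite{Gen,GRS}, where it is deduced from their separately proved criterion for a closed plane curve to be quasi-simple. So your attempt can only be judged as a self-contained proof, and as such it has a genuine gap. Your reductions are correct: condition (2) puts all $n-1$ zeros of $P'$ on $\partial\mathbb D$, so $P$ is locally univalent in $\mathbb D$; the argument principle identifies the number of preimages of $w_0\notin\gamma$ with the winding number $n(\gamma,w_0)$; and your final perturbation argument correctly upgrades ``winding number $\le 1$ off $\gamma$'' to injectivity on all of $\mathbb D$. But the entire content of the theorem is the step you defer --- that conditions (1) and (3) force $n(\gamma,w_0)\le 1$ --- and the sketch you give for that step does not go through.

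Concretely, two problems. First, even granting that $\tilde\gamma=\phi\circ\gamma$ meets $\partial\mathbb D$ once per arc and that these touch points occur in monotone cyclic order, this does not bound the winding number: between two consecutive touches the curve is only required to stay in the open disk, and there it can make any number of full extra loops around the origin, each adding $2\pi$ to the total change of argument. Your phrase ``so that the total change of $\arg\tilde\gamma(t)$ over $[0,2\pi]$ is exactly $2\pi$'' is precisely where this possibility is silently excluded; ruling it out is the theorem itself, and it must exploit the analytic structure of $\gamma(t)=P(e^{it})$ and its cusps, not merely the topology of the touch points. Note also that condition (3) guarantees at least one touch per arc, not exactly one, and contains no ordering statement, so both of those claims would themselves require proof. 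Second, the passage to $\tilde\gamma$ is problematic: $\Omega$ is an arbitrary simply connected domain given by hypothesis, its boundary need not be a Jordan curve (and you cannot arrange this, since $\Omega$ is part of the data, not something you construct), so the Riemann map $\phi$ need not extend continuously to the points $w_j=P(e^{i\tau_j})\in\partial\Omega$; what comes for free is only $|\phi(\gamma(t))|\to 1$ as $t\to\tau_j$, which is not enough even to define ``touch points in cyclic order.'' These difficulties are the reason the original authors route the result through their quasi-simplicity criterion rather than through a direct winding-number computation of the kind you propose.
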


Quasi-extremal polynomials are interesting candidates as extremal polynomials for maximal range problems - see~\cite{CR1,CR2,4}.
In particular, Suffridge polynomials are a particular example of q-e polynomials as one can observe from their boundary representation.
The disposition of the roots of the derivatives of polynomials is an important subject in the theory, see
for example the recent article by N. Naidenov, G. Nikolov, and A. Shadrin \cite{NNS}.

\section{Approximation of the Koebe function}
\label{sec:approx}

In \cite{D} it was shown by Dimitrov that 
\[
\displaystyle\left(-\frac14,\frac14\cot^2\frac\pi{2N+2}\right)\subset\frac{-1}{4S_{N,1}(-1)}S_{N,1}(\mathbb D).
\] 
Now, in \cite{AR} Andrievskii and Ruscheweyh proved a remarkable theorem on polynomial approximation of conformal mappings of the unit disc $\mathbb D.$ 
According to their key theorem, there exists a constant $\rho_N$ such 
\[
\displaystyle K(\rho_N\mathbb D)\subset\frac{-1}{4S_{N,1}(-1)}S_{N,1}(\mathbb D)
\]
where the constant $\rho$ is computed in \cite{GR}:
\[
\rho_N=\frac{1-\sin\frac\pi{2N+2}}{1+\sin\frac\pi{2N+2}}\sim 1-\frac\pi N.
\]
This proves that Suffridge polynomials $S_{N,1}(z)$ approximate Koebe function in a sense or subordination of the images of $\mathbb D.$\\

Let us now consider a problem of uniform approximation of Koebe function by Suffridge polynomials in the disc $D_\rho=\{z:|z|<\rho\}.$ 
To this effect, let us modify the close form of these polynomials from the Proposition~\ref{pro:closed}.
Let us restrict the value of $j$ to
\[
\frac j N=O\left(\frac1N\right).
\]
Using quadratic approximation of the cosine and the geometric series decomposition we get 
\[
\frac z{1-2\cos\left(\frac{j\pi}{N+1}\right)z+z^2}=K(z)+O\left(\frac1{N^2}\right),
\]
\[
\frac zN \frac{ 2z\left(z- \cos\frac{j\pi}{N+1}\right) + (-1)^j z^{N+1}(1-z^2)}{\left(1-2\cos\left(\frac{j\pi}{N+1}\right)z+z^2\right)^2}=
\frac{-2z^2+ (-1)^j z^{N+1}(1+z)}{(1-z)^3}+O\left(\frac1{N^3}\right)
\]
From there
\[
\|S_{N,j}(z)-K(z)\|<\frac2{(1-\rho)^3}\frac1N+o\left(\frac1N\right).
\]

Now, let  $N$ be odd and $j=\frac{N+1}2$; then the polynomials $S_{n,\frac{N+1}2}(z)$ can be used to approximate the 2-symmetric Koebe function $K^{(2)}(z)=\frac z{1+z^2},$ and
\[
\|S_{n,\frac{N+1}2}(z)-K^{(2)}(z)\|<\frac1{2(1-\rho)^2}\frac1N+o\left(\frac1N\right).
\]
Finally, if $j=qN,$ 
then
\[
\|S_{N,j}(z)-\frac z{1-2z\cos q\pi+z^2}\|<\frac1{2\sin q\pi(1-\rho)^2}\frac1N+o\left(\frac1N\right).
\]

These estimates demonstrate that the approximation of univalent functions by univalent polynomials in $D_\rho$ might have a small rate of convergence with respect to the uniform metric due to the large constants depending on $\rho$ while the approximation in sense of subordinations is admissible. 

As a final remark, note that  the coefficients of the polynomials  $S_{N,1}(z)$ and $S_{N,\frac{N+1}2}(z)$ also appear in the problem of chaos control in the systems with discrete time \cite{DHKKS}.

\section{Robertson measure for Suffridge polynomials}
\label{sec:robertson}

The generating function for the Chebyshev polynomials of the second kind is the following
\[
\frac1{1-2tz+z^2}=\sum_{k=1}^\infty U_{k-1}(t)z^{k-1}
\]
These polynomials are orthogonal on the segment [0,1] with respect to the weight $\sqrt{1-t^2}$
\[
\frac2\pi\int_{-1}^1\sqrt{1-t^2}U_m(t)U_n(t)\; dt=\delta_{m,n}.
\]
Therefore
\[
z^{k-1}=\frac2\pi\int_{-1}^1 \frac1{1-2tz+z^2}\sqrt{1-t^2}U_{k-1}\; dt,\qquad k=1,2,...
\]
and an arbitrary polynomial $f(z)=\sum_{j=1}^Nb_jz^j$ admits a representation

\begin{equation} \label{eq:frep}
f(z)=\frac2\pi\int_{-1}^1 \frac1{1-2tz+z^2}\sqrt{1-t^2}
\sum_{k=1}^Nb_k U_{k-1}(t)\; dt. 
\end{equation}

Conversely, the function
\[
\frac2\pi\int_{-1}^1 \frac1{1-2tz+z^2}\sqrt{1-t^2}
\left(
\frac{c_0}2+\sum_{k=1}^{N-1}c_kt^k
\right)  dt
\]
is a polynomial
\[
f(z)=\frac12\sum_{j=1}^N(c_{j-1}-c_{j+1})z^j
\]
assuming that $c_N=c_{N+1}=0.$\\

Let us write~(\ref{eq:frep}) in the form 

\begin{equation} \label{eq:fint}
f(z)=\int_{-1}^1 \frac1{1-2tz+z^2}d\mu(t)
\end{equation}

where
\[
\mu(t)=\frac2\pi\int_{-1}^t \sqrt{1-\xi^2} \sum_{k=1}^Nb_k U_{k-1}(\xi)\;d\xi.
\]
Let us compute the integrals.
\[
\frac2\pi\int_{-1}^t \sqrt{1-\xi^2} U_0(\xi)\;d\xi=-\frac12+\frac1\pi\arcsin t+\frac1\pi t \sqrt{1-t^2}
\]
\[
\frac2\pi\int_{-1}^t \sqrt{1-\xi^2} U_{k-1}(\xi)\;d\xi=
-\frac1\pi t \sqrt{1-t^2}\left(\frac{U_{k-2}(t)}{k-1} -\frac{U_{k}(t)}{k+1}\right)
,\qquad k=2,...,N.
\]
Note \cite{DDS} that
\[
(k+1)U_{k-2}(t)-(k-1)U_k(t)=2(1-t^2)U^\prime_{k-1}(t). 
\]
Therefore
\[
\frac2\pi\int_{-1}^t \sqrt{1-\xi^2} U_{k-1}(\xi)\;d\xi=\frac2\pi\frac{1-t^2}{k^2-1}U^\prime_{k-1}(t),\qquad k=2,...,N.
\]
Then
\[
d\mu(t)=\frac1\pi\sqrt{1-t^2}\left(1+3t\sum_{k=2}^Nb_k\frac1{k^2-1}U^\prime_{k-1}(t)-(1-t^2)\sum_{k=2}^Nb_k\frac1{k^2-1}U^{\prime\prime}_{k-1}(t)
\right)dt. 
\]
Because
\[
d\mu(t)=\frac2{\pi N}\sqrt{1-t^2}\sum_{k=2}^N b_kU_{k-1}(t) dt
\]
we obtain the well-known relation for Chebyshev polynomials of the second kind
\[
U_k(t)=\frac1{k(k+1)}\left( 3tU^\prime_k(t)-(1-t^2)U^{\prime\prime}_{k}(t)\right).
\]

If the polynomial $f(z)$ is typically-real then $\Im(f(e^{i t}))\ge 0$ for $t\in[0,\pi],$ this corresponds the inequality
\[
\sum_{k=1}^N b_kU_{k-1}(t)\ge 0 \qquad \mbox{for}\quad t\in [-1,1].
\]
In other words, the function $\mu(t)$ can be consider as a Robertson measure \cite{R} in the representation~(\ref{eq:fint}) for the typically real function.\\

Let us consider the Robertson measure $\mu$ for the polynomials $S_{N,j}(z)$. 
Taking into account that the coefficients of those polynomials are 
\[
a_k=\left(1-\frac{k-1}N\right)U_{k-1}\left(\cos\frac{\pi j}{N+1}\right),
\]
we get
\[
\mu(t)=-\frac12+\frac1\pi\arcsin t+\frac1\pi t \sqrt{1-t^2} 
-\frac2{\pi N}\sqrt{1-t^2}\sum_{k=2}^N\frac{N-k+1}{k^2-1}
U_{k-1}\left(\cos\frac{\pi j}{N+1}\right)U^\prime_{k-1}(t).
\]

The graphs  of Robertson measure for various Suffridge polynomials are given in Figure~\ref{fig:robertson}.

\begin{figure}[h!]
\centerline{
\includegraphics[scale=0.35]{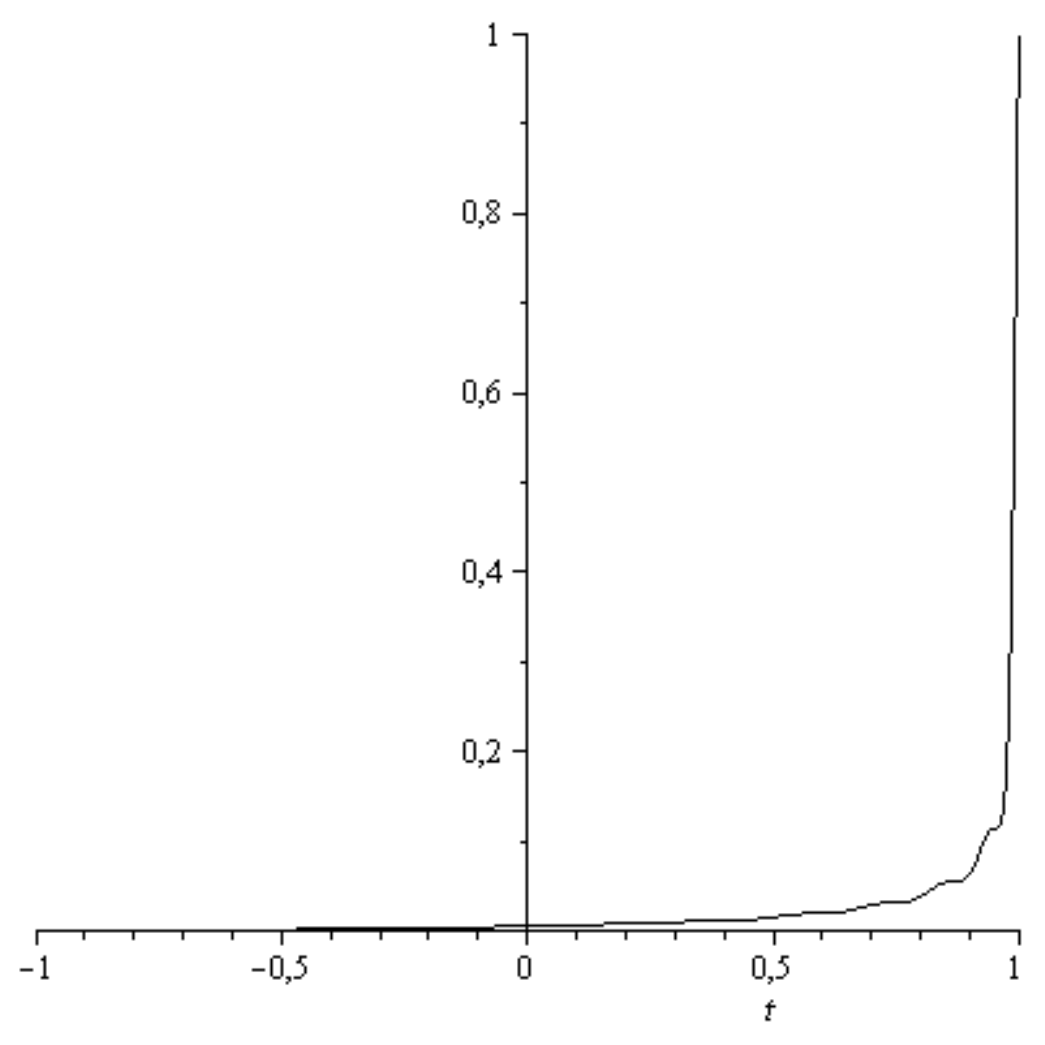}
\includegraphics[scale=0.35]{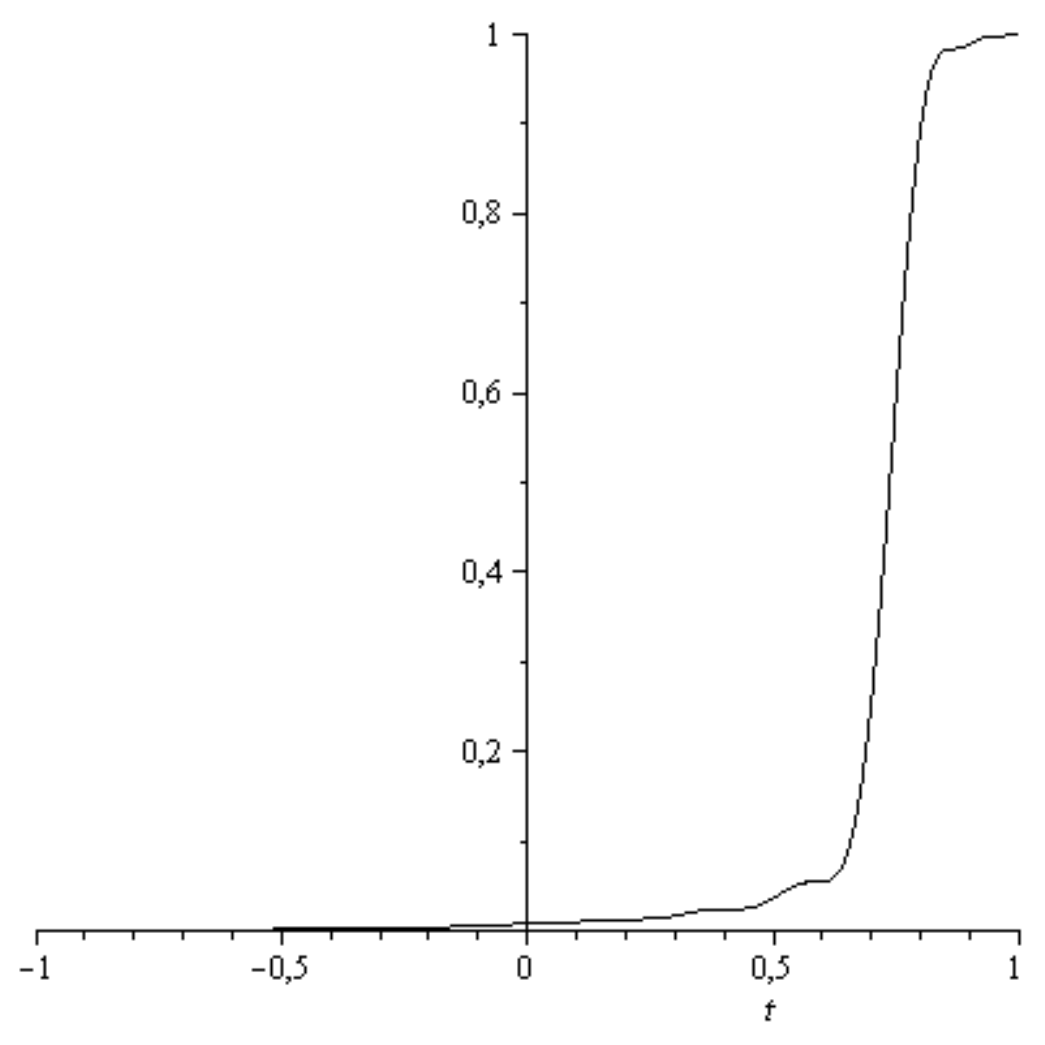}
\includegraphics[scale=0.35]{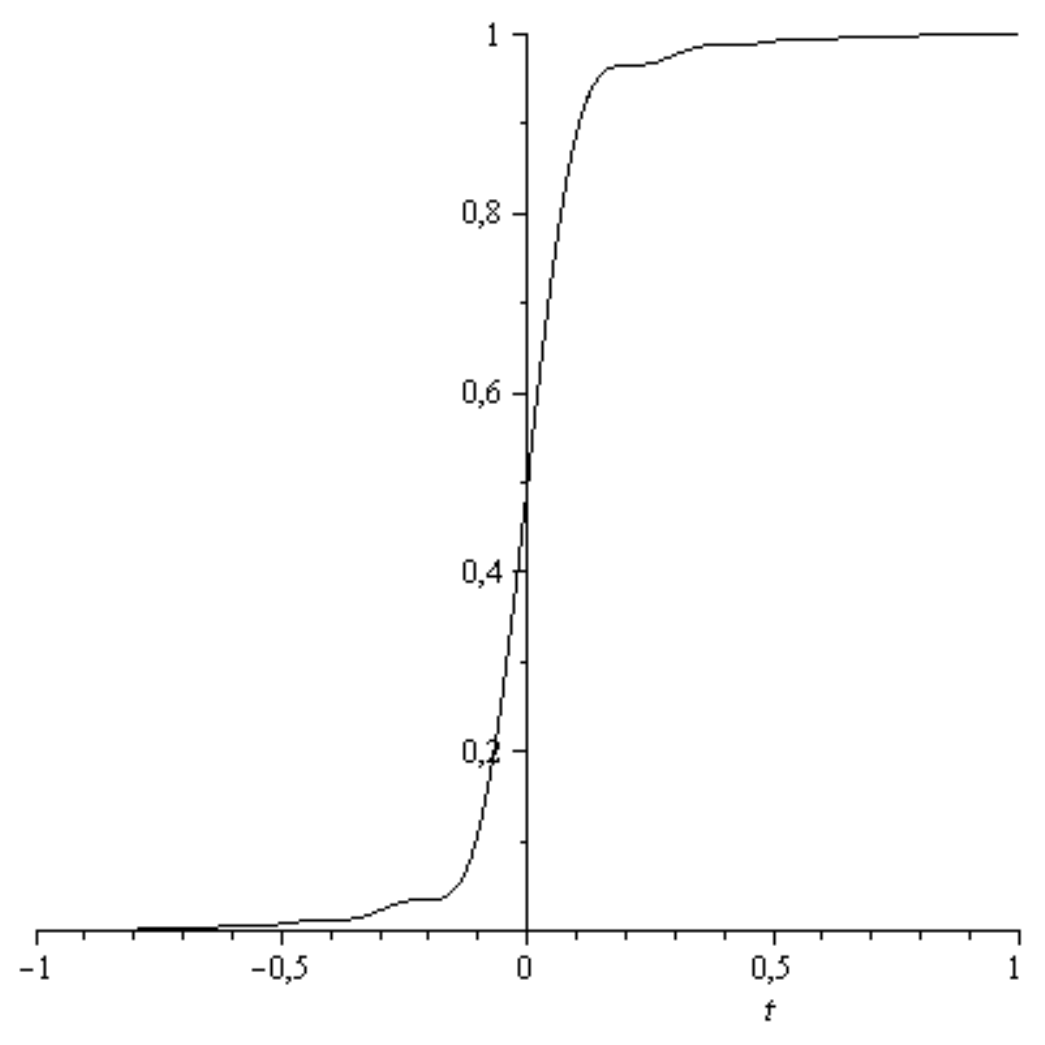}
}
\caption{{T}he graphs of Robertson measure for the polynomials $S_{29,1}(z)$, $S_{29,7}(z)$, $S_{29,15}(z)$}
\label{fig:robertson}
\end{figure}

Note that the Robertson measure approaches the step function
\[
\mu_\infty(t)= 
\begin{cases} 0, & \mbox{if } t\in\left.\left[-1,\cos\frac{\pi j}{N+1}\right)\right. \\ 
1, & \mbox{if } t\in\left[\cos\frac{\pi j}{N+1},1\right]
\end{cases},
\]
this can be considered as a monotonic approximation of step functions. 
Alternative kernels and another measures, including ones for Suffridge polynomials, can be found in \cite{GH}.

\section{Robust Univalency}
\label{sec:robust}

In addition to Suffridge polynomials $S_{N,j}(z)$  let us consider the polynomials 

\begin{equation}
\label{eq:gensuf}
S_{N}(z,\mu)= \sum_{k=1}^N \frac{N+1-k}{N} \frac{\sin(k\mu)}{\sin\mu} z^k, 
\end{equation}

where $\mu$ is a real parameter. It is clear that $S_N\left(z, \frac{j\pi}{N+1}\right)=S_{N,j}(z)$ and that without loss of generality we can restrict ourselves to the case $\mu\in (0,\pi).$ If 
\[
\mu\in\left\{\frac{j\pi}{N+1}: j=1,...,N\right\}
\]
then the polynomials~(\ref{eq:gensuf}) are univalent. Let us ask a question: {\it are there values for 
\[
\mu\in(0,\pi)\backslash\left\{\frac{j\pi}{N+1}: j=1,...,N\right\}
\]
for which the polynomials~(\ref{eq:gensuf}) are univalent for some $N$.}

Using Lemmas~\ref{lem:key} and~\ref{lem:dkey} we can write
\[
S_N(z,\mu)=\frac z{(1-2z\cos\mu+z^2)^2}\left( 
1-2z(1+\frac1N)\cos\mu + (1+\frac2N)z^2+
\right.
\]
\[
\left.+\frac{z^{N+1}}{N\sin\mu}(\sin(N+2)\mu -2z\sin(N+1)\mu+z^2\sin N\mu)\right)
\]
or
\[
S_N(z,\mu)=\frac z{(\frac1z-2\cos\mu+z)^2}\left(\frac1z+z-2(1+\frac1N)\cos\mu+
\right.
\]
\[
\left.
\frac1{N\sin\mu}(2z\sin\mu+z^N\sin(N+2)\mu-2z^{N+1}\sin(N+1)\mu +z^{N+2}\sin N\mu)
\right).
\]
Therefore, the sign of $\Im(S_N(e^{it},\mu))$ coincides  with the sign of the function
\[
\Phi(t,\mu)=2\sin t\sin \mu+\sin Nt\sin(N+2)\mu-2\sin(N+1)t\sin(N+1)\mu+\sin(N+2)t \sin N\mu.
\]
However, 
\[
\Phi(\mu+\frac{2\pi}{N+1},\mu)=\Phi(\mu-\frac{2\pi}{N+1},\mu)=-4\sin^2\frac\pi{N+1}\sin^2(N+1)\mu.
\]

Thus, for any 
\[
\mu\in(0,\pi)\backslash\left\{\frac{j\pi}{N+1}: j=1,...,N\right\}
\]
there exists a number $t\in (0,\pi)$, more precisely $t=\mu\pm\frac{2\pi}{N+1}$ such that $\Phi(t,\mu)<0$. 
That means that all polynomials of the form~(\ref{eq:gensuf}), except for the Suffridge ones, are not typically real, hence not univalent. 
In that sense, the robust stability of the polynomials from the family~(\ref{eq:gensuf}) is false.

\section{A new family of polynomials}
\label{sec:new}

At a first glance, it seems that the lack of the robust univalency of the family of polynomials~(\ref{eq:gensuf}) is a consequence of the quasi-extremality in the Ruscheweich sense, i.e. the polynomials are on the verge of univalency -- all zeroes of the derivative of those polynomials are on the unit circle (see the Definition~\ref{def:qe} above). f
Instead, let us consider the following family
\begin{equation}
\label{eq:G}
G_{N,\mu}(z)=z+\sum_{k=2}^N\left(1-\frac{k-1}N\right)\prod_{j=1}^{k-1}\frac{\sin\frac{\pi(j+\mu)}{N+\mu}}{\sin\frac{\pi j}{N+\mu}}z^k.
\end{equation}
Note that for $\mu=1$ the polynomial $G_{N,\mu}(z)$ coincides with $S_{N,1}(z),$ for $\mu=0$ it is the Fej\'er polynomial, and 
\[
G_{N,-1}(z)=z+\frac{z^N}N.
\]
One can see from Figure~\ref{fig:G} that the polynomials $G_{N,\mu}(z)$ look like univalent q-e polynomials.\\

{\bf Conjecture} {\it There exists a constant $\zeta(N)<-1$ such that all polynomials of the family~(\ref{eq:G}) are univalent in the unit disc $\mathbb D$ for $\mu\in[-\zeta(N),1].$ Moreover, all zeroes of the derivative of the polynomials~(\ref{eq:G}) are on the unit circle and the segment $[-\zeta(N),1]$ cannot be enlarged without loss of univalency.}\\

In other words the Conjecture claims that all polynomials $G_{N,\mu}(z)$ are quasi-extremal in the Ruscheweich sense.\\

Note that in a central disc of radius smaller then 1, the polynomials $G_{n,-1}(z)$ approximate the function $\frac z{(1-z)^{1+\mu}}.$
Moreover, it is interesting to note that if one drops sine symbols in~(\ref{eq:G}) then the product turns into a normalized binomial coefficient $\displaystyle {\mu+k-1\choose k-1}.$ 
In general one could replace sine with any function $g(t)$ and thus have $g$-generalized binomial coefficients and the corresponding polynomials.\\

\section{Conclusion}

In the article we have summarized some key facts about Suffridge polynomials that do not appear in the literature and which seem quite relevant and interesting given the wide scope of applications for these polynomials. 
One of the next steps to undertake is to better understand and hopefully provide a proof of the conjecture stated in the previous section. 

\begin{figure}[t]
    \centering
    \begin{subfigure}[b]{0.3\textwidth}
        \includegraphics[width=\textwidth, scale=0.80]{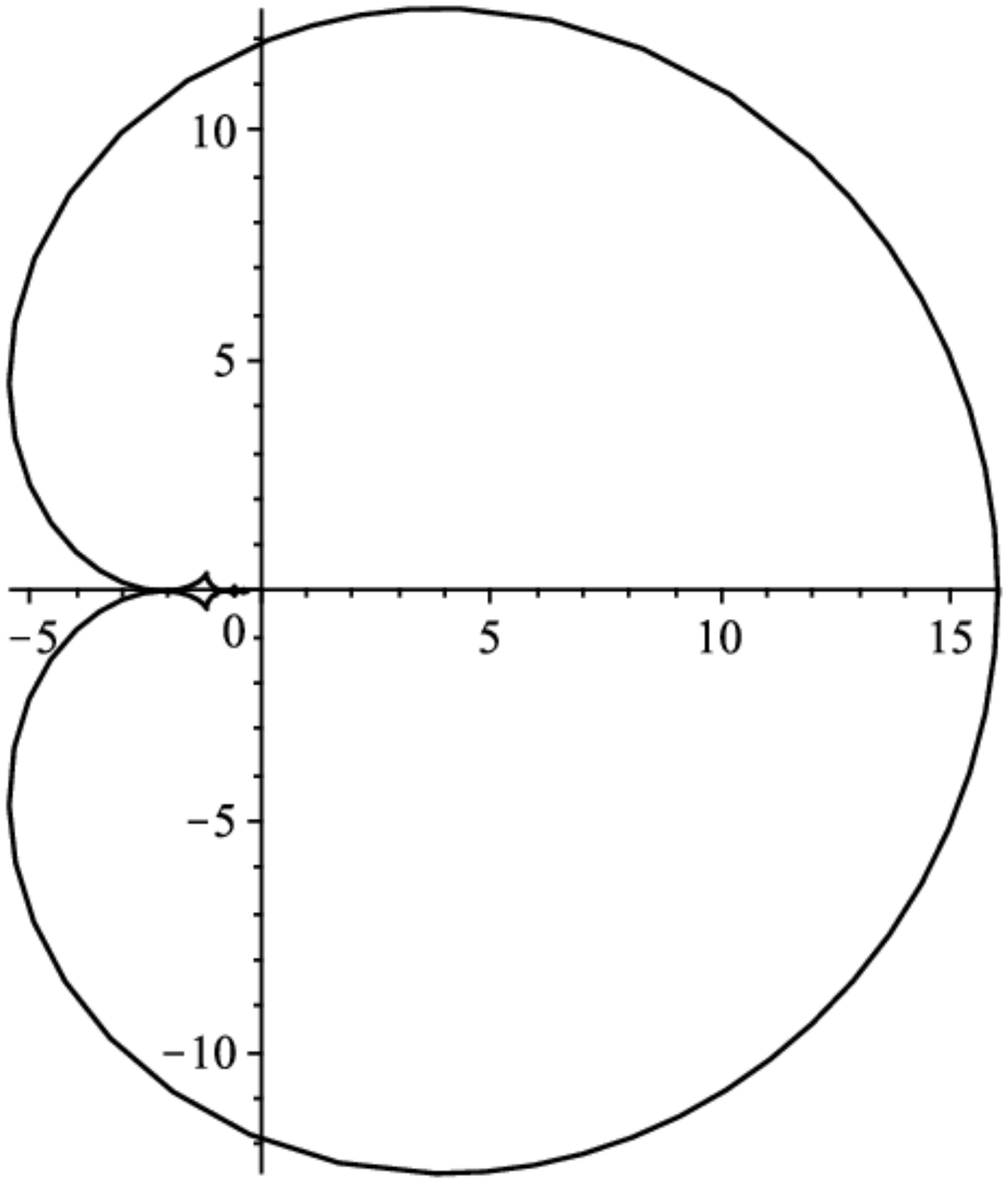}
    \end{subfigure}
    \begin{subfigure}[b]{0.3\textwidth}
        \includegraphics[width=\textwidth, scale=0.80]{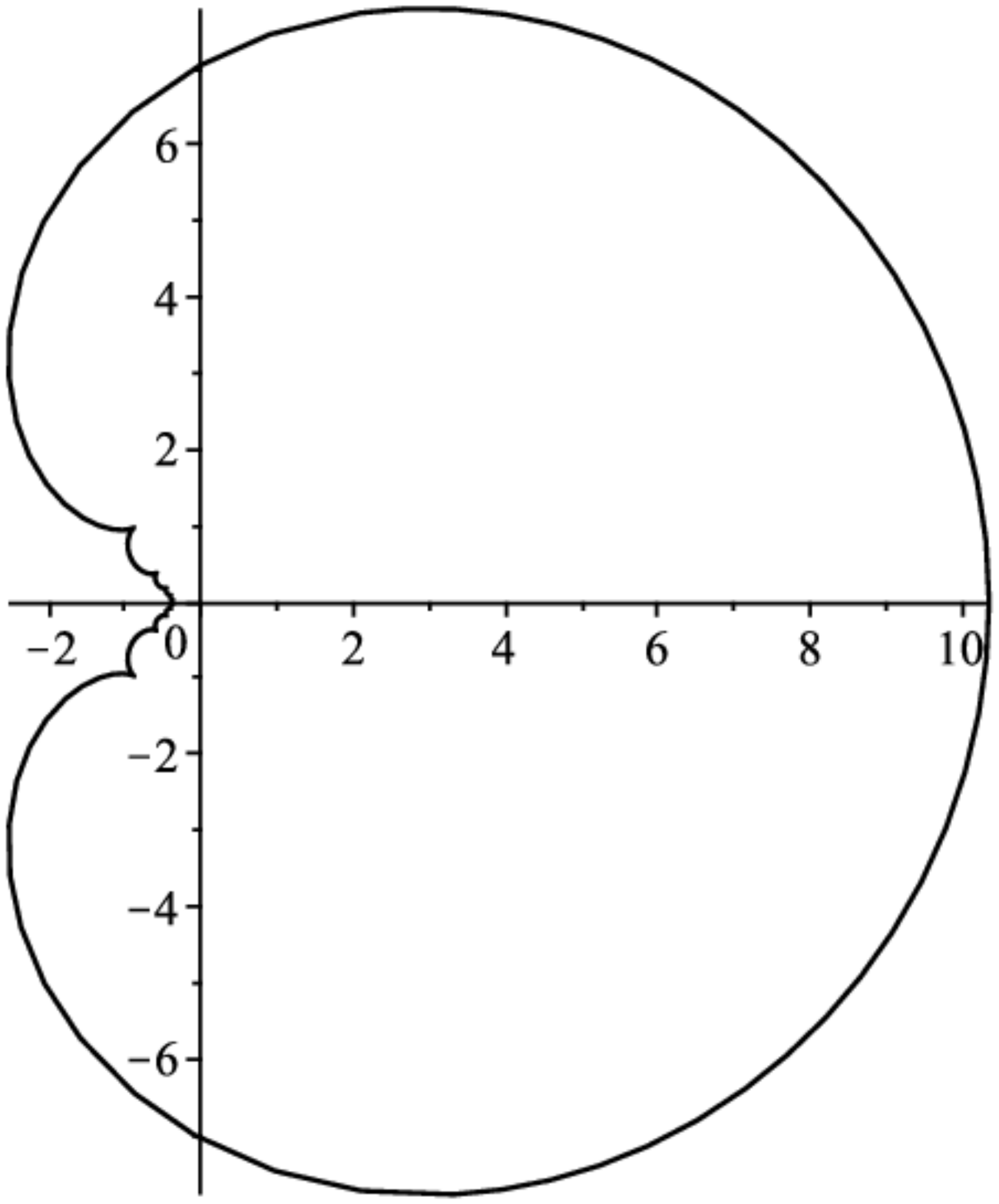}
    \end{subfigure}
    \begin{subfigure}[b]{0.3\textwidth}
        \includegraphics[width=\textwidth, scale=0.80]{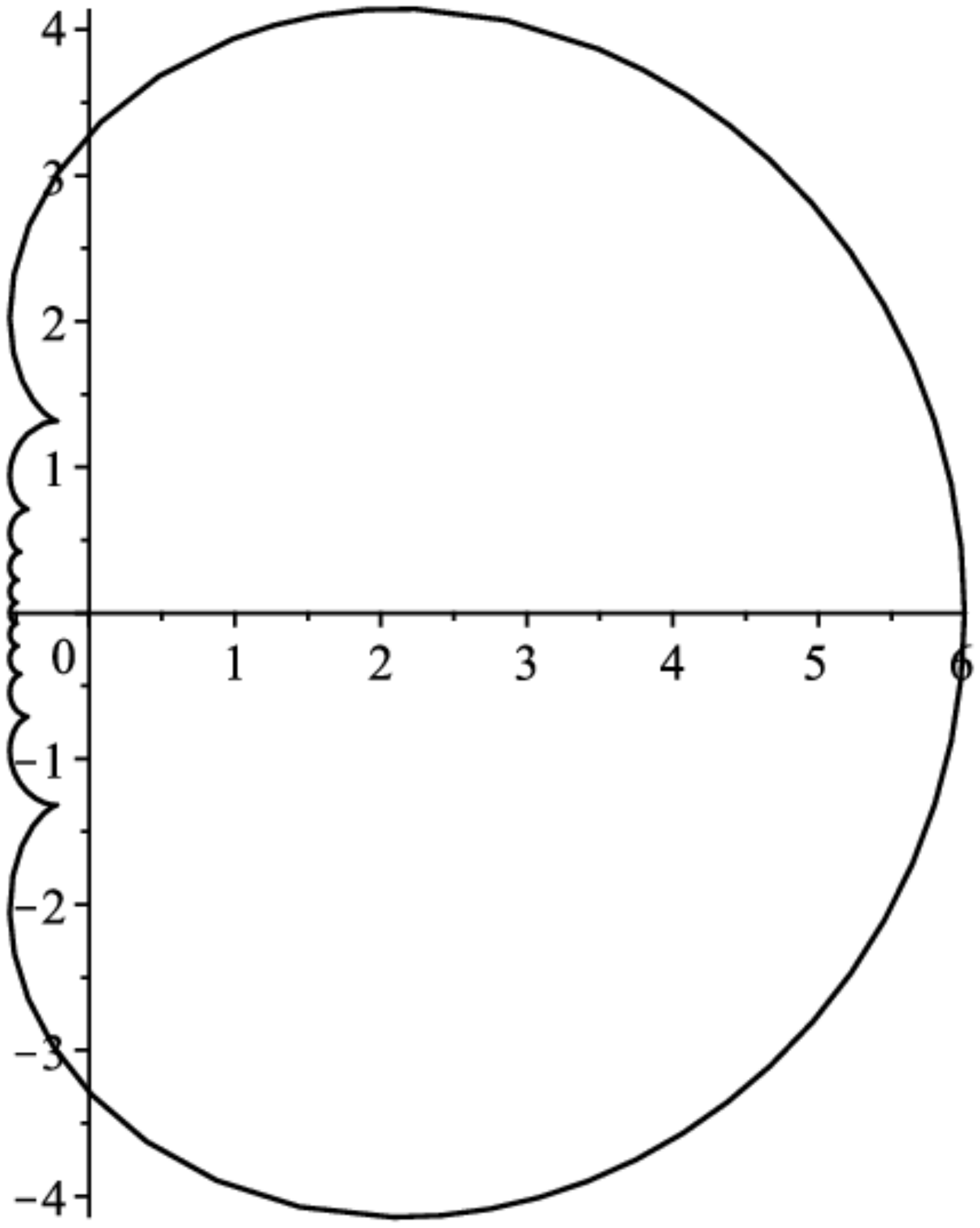}
    \end{subfigure}
     \begin{subfigure}[b]{0.3\textwidth}
        \includegraphics[width=\textwidth, scale=0.80]{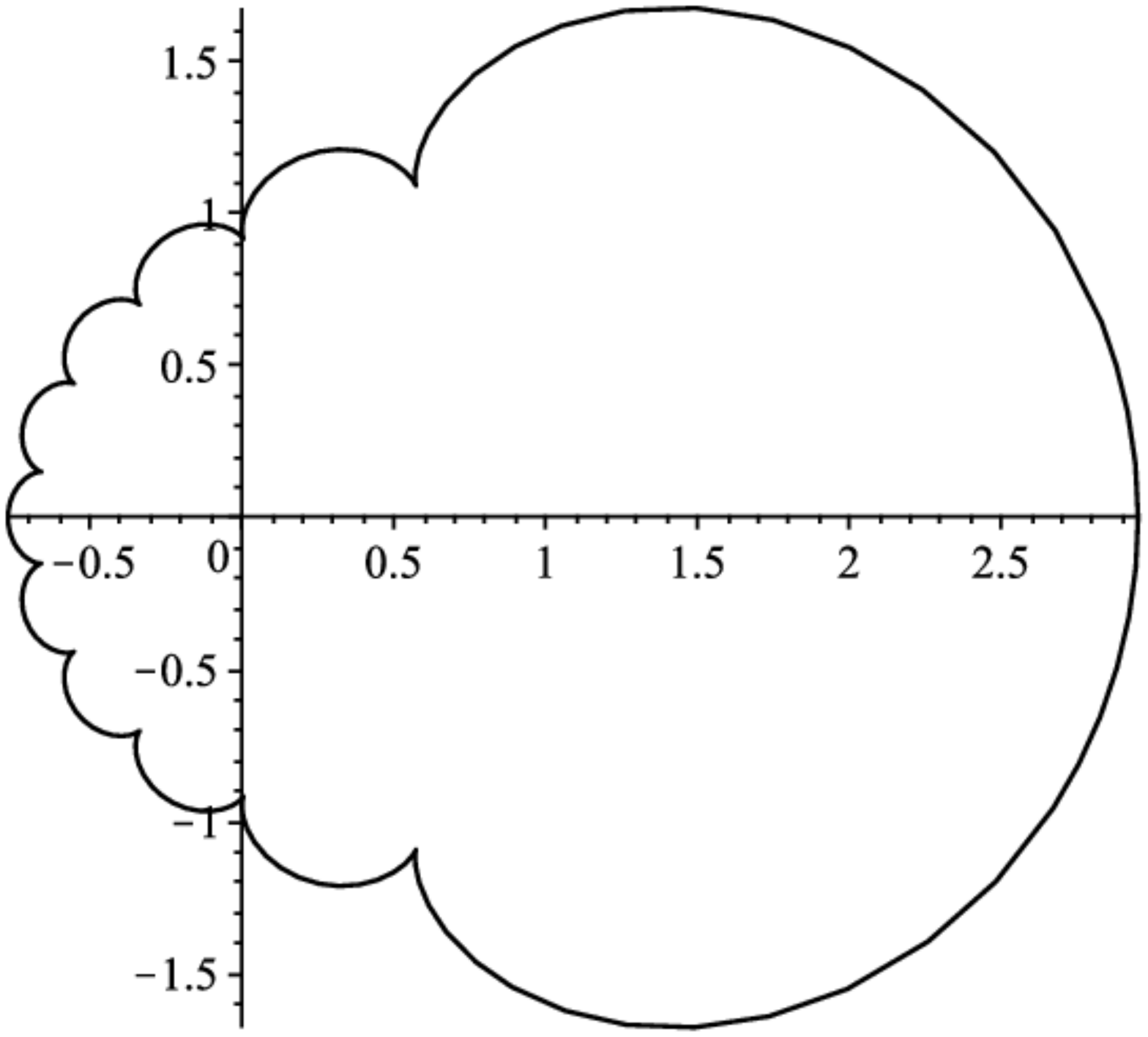}
    \end{subfigure}
    \begin{subfigure}[b]{0.3\textwidth}
        \includegraphics[width=\textwidth, scale=0.80]{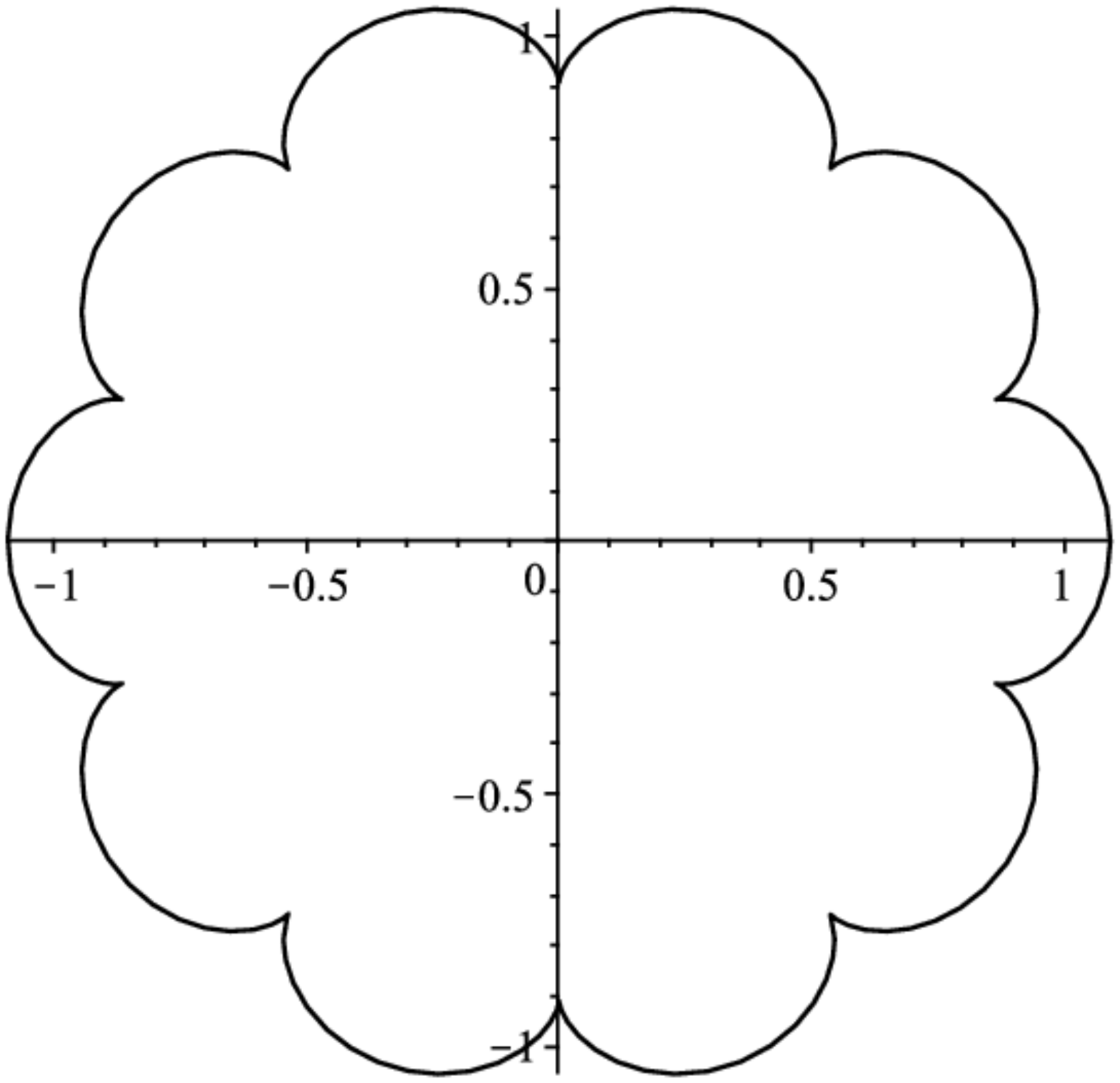}
    \end{subfigure}
    \begin{subfigure}[b]{0.3\textwidth}
        \includegraphics[width=\textwidth, scale=0.80]{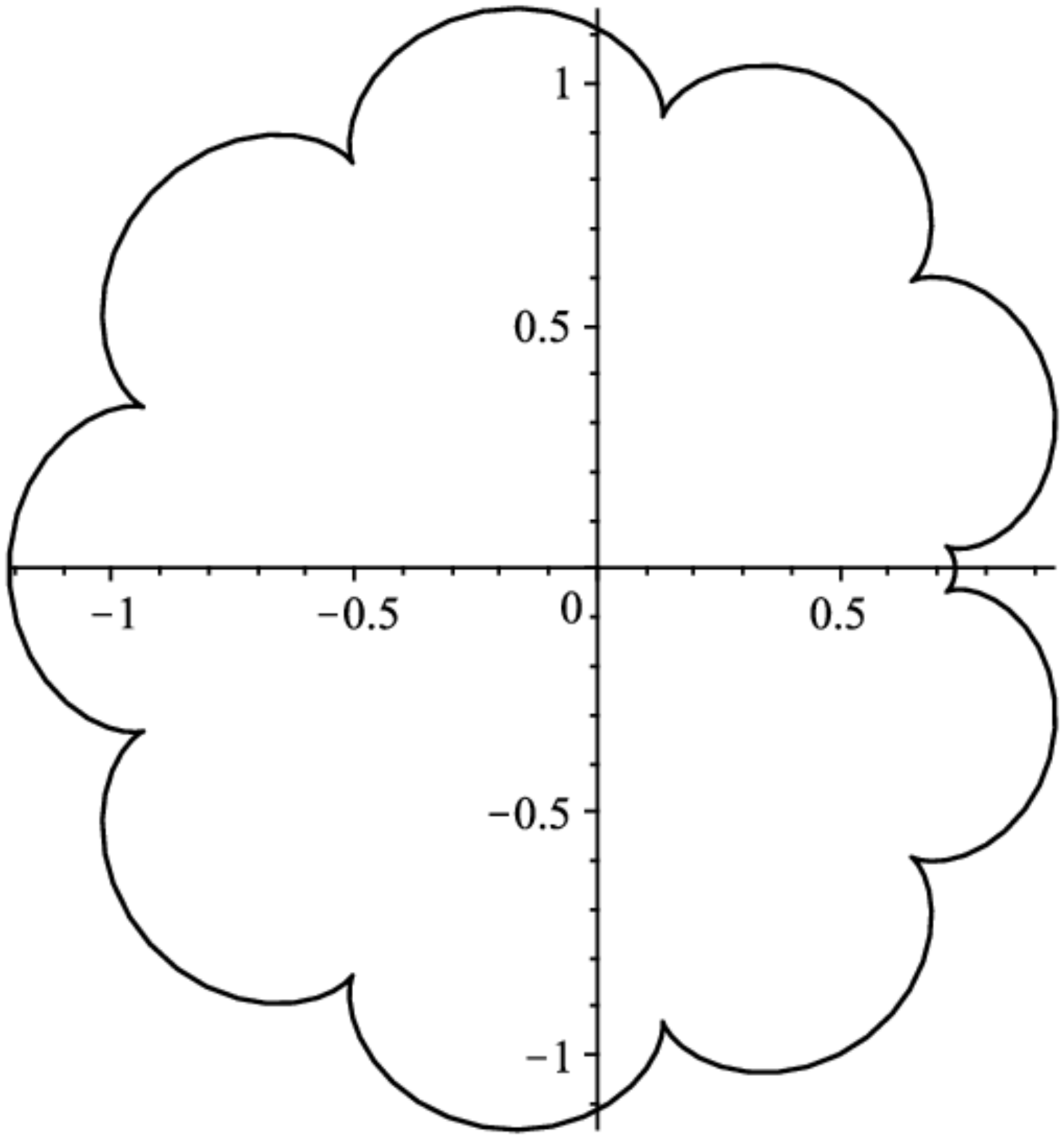}
    \end{subfigure}
    \caption{Images of the unit circle under polynomial mappings $G_{11, \mu}(z)$: a) $\mu = 1$; b) $\mu = 0.5$; c) $\mu = 0$; d) $\mu = -0.5$; e) $\mu = -1$; f) $\mu = -1.15$.}
    \label{fig:G}
\end{figure}

\end{document}